\documentclass[10pt,leqno]{amsart}
\usepackage{indentfirst, amssymb,amsmath,amsthm,color}
\usepackage{csquotes}
\usepackage{hyperref}
\usepackage{graphicx}
\newtheorem{theo}{Theorem}[section]

\newtheorem{note}{Note}[section]
\numberwithin {equation}{section}
\begin{document}
\title[An integral representation of the generalized Fibonacci numbers]{An integral representation of the generalized Fibonacci numbers}
\date{}
\author[B. Chakraborty, K. Barman, S. Roy and R. Sinha]{Bikash Chakraborty, Kalyan Barman, Soumon Roy and Ritam Sinha}
\date{}
\address{{\bf Bikash Chakraborty}, Nevanlinna Lab, Department of Mathematics, Ramakrishna Mission Vivekananda Centenary College, Rahara, West Bengal 700118, India.}
\email{bikashchakraborty.math@yahoo.com, bikashchakrabortyy@gmail.com}
\address{{\bf Kalyan Barman}, Nevanlinna Lab, Department of Mathematics, Ramakrishna Mission Vivekananda Centenary College, Rahara, West Bengal 700118, India.}
\email{kalyannbu123@gmail.com}
\address{{\bf Soumon Roy}, Nevanlinna Lab, Department of Mathematics, Ramakrishna Mission Vivekananda Centenary College, Rahara, West Bengal 700118, India.}
\email{rsoumon@gmail.com}
\address{{\bf Ritam Sinha}, Nevanlinna Lab, Department of Mathematics, Ramakrishna Mission Vivekananda Centenary College, Rahara, West Bengal 700118, India.}
\email{ritamsinha23@gmail.com}
\footnotetext{\textbf{Keywords}: Fibonacci numbers; Tribonacci  numbers; Contour integral; Cauchy's theorem, $k$-generalized Fibonacci numbers, Binet form.}
\footnotetext{\textbf{Mathematics Subject Classification}: Primary 11B39, 97I80; Secondary 05A19, 11B83, 11B99, 30E20.}
\maketitle
\begin{abstract}
In this note, we present an integral representation of the $k$-generalized Fibonacci sequence and provide a proof using contour integration. This representation enables the derivation of several known identities and a Binet-type formula related to the $k$-generalized Fibonacci sequence.
\end{abstract}
\section{Fibonacci Numbers}
The Fibonacci sequence is defined such that each term is the sum of the two preceding terms. These numbers, known as Fibonacci numbers and denoted by $F_n$, typically begin with $0$ and $1$. Starting from these initial values, the sequence progresses as follows:
$$0, 1, 1, 2, 3, 5, 8, 13, 21, 34, 55, 89, 144, \ldots$$
The Fibonacci numbers possess a wide range of well-studied properties, as evidenced by the two comprehensive volumes devoted to them by Koshy \cite{K1, K2}.\\
Recently, Andrica and Bagdasar (\cite{AB}, p. 132)  as well as Glasser and Zhou \cite{GZ} have provided two integral representations for the Fibonacci numbers. If $F_n$ is the $n$-th Fibonacci number, then
$$F_n=\frac{1}{2\pi R^{n-1}}\int_{0}^{2\pi}\frac{-R^2\cos(n+1)t-R\cos nt+\cos(n-1)t}{R^4+3R^2+1+2(R^3-R)\cos t-4R^2\cos^2 t}dt,$$
for every positive real number $R<\frac{\sqrt{5}-1}{2}$ (see \cite{AB}, p. 132). Also,
$$F_n=\frac{1}{\sqrt{5}}\left(\frac{\sqrt{5}+1}{2}\right)^n-\frac{2}{\pi}\int_{0}^{\infty}\frac{\sin \frac{x}{2}}{x}\frac{\cos(nx)-2\sin(nx)\sin x}{5\sin^2 x+\cos^2 x} dx,$$
for $n\in \mathbb{N}\cup\{0\}$, \cite{GZ}. Additionally, Dilcher \cite{D} offered an integral representation for the even Fibonacci numbers as:
$$F_{2n}=\frac{n}{2^n}\int_{-1}^{1}(3+x\sqrt{5})^{n-1}dx,$$
where $n\in \mathbb{N}$. In a recent note, Stewart \cite{S} obtained that
$$F_{n}=\frac{n}{2^n}\int_{-1}^{1}(3+x\sqrt{5})^{n-1}dx,$$
where $n\in \mathbb{N}$. Additionally, replacing $n$ by $2n$ in the above formula, in \cite{S}, Stewart obtained the same formula for even Fibonacci numbers as Dilcher  offered in \cite{D}.\medbreak
In this note, we present a new integral representation of the Fibonacci numbers using Fibonacci polynomial $z^2-z-1$ and contour integration. To be more precise, using the classical Fibonacci numbers as an example, we begin with their generating function
$$\sum_{k=0}^{\infty} F_kz^k =\frac{z}{1-z-z^2}.$$
Now, differentiating both sides $n$ times and evaluating at $z=0$, and then applying Cauchy's integral formula for derivatives, we obtain
\begin{eqnarray*}
  n!\cdot F_n &=& \bigg[\frac{d^n}{dz^n} \left(\sum_{k=0}^{\infty} F_kz^k\right)\bigg]_{z=0} \\
  &=& \bigg[\frac{d^n}{dz^n} \left(\frac{z}{1-z-z^2}\right)\bigg]_{z=0} \\
  &=& \frac{n!}{2\pi i}\int_{\gamma}\frac{z/(1-z-z^2)}{(z-0)^{n+1}}\;dz\\
  &=& \frac{n!}{2\pi i}\int_{\gamma}\frac{z^{-n}}{1-z-z^2}\;dz,
\end{eqnarray*}
where $\gamma$  is the curve traversing a circle of sufficiently small radius (so that $z/(1-z-z^2)$ become holomorphic inside the $\gamma$) once around
the origin in the positive direction.\\ In the above integral, the substitution $z\to z^{-1}$ motivates us to propose a new integral representation of the Fibonacci numbers as follows:
\begin{eqnarray}
  F_n &:=& \frac{1}{2\pi i}\int\limits_{|z|=\phi^+}\frac{z^n}{z^2-z-1}\; dz,
\end{eqnarray}
where $\phi^+$ is any real number strictly greater than $\frac{1+\sqrt{5}}{2}$. Let $\alpha$ and $\beta$ be the two zeros of $z^2-z-1=0$. Using the Residue theorem \cite{Co}, we obtain
\begin{enumerate}
  \item [(i)] $F_0=\frac{1}{2\pi i}\int\limits_{|z|=\phi^+}\frac{dz}{z^2-z-1}=\frac{1}{\alpha-\beta}+\frac{1}{\beta-\alpha}=0$,
  \item [(ii)] $F_1=\frac{1}{2\pi i}\int\limits_{|z|=\phi^+}\frac{z}{z^2-z-1}\;dz=\frac{\alpha}{\alpha-\beta}+\frac{\beta}{\beta-\alpha}=1$, and
  \item [(iii)] for $n\geq 2$, using the Cauchy's theorem \cite{Co}, we have $$F_n-F_{n-1}-F_{n-2}=\frac{1}{2\pi i}\int\limits_{|z|=\phi^+}\frac{(z^n-z^{n-1}-z^{n-2})}{z^2-z-1}\;dz=\frac{1}{2\pi i}\int\limits_{|z|=\phi^+}z^{n-2}\;dz=0,$$
      i.e., $F_n=F_{n-1}+F_{n-2}$ for $n\geq 2$.
\end{enumerate}
Thus the proposed integral represents the $n$th Fibonacci number.
\section{$k$- generalized Fibonacci Numbers}
For  a fixed integer $k\geq 2$, the $n$th $k$-generalized Fibonacci sequences \cite {DD, lee}, denoted by $F_n^{(k)}$, is defined by the following recurrence equations:
$$F_n^{(k)}=\Bigg\{\begin{array}{c}
              0~~\text{if}~~0\leq n \leq k-2,\hfill \\
              1~~\text{if}~~n=k-1,\hfill \\
              F_{n-1}^{(k)}+F_{n-2}^{(k)}+\cdots+F_{n-k}^{(k)}~~\text{if}~~n\geq k.
            \end{array}$$
Next, we present a chart from \cite{W} of the initial values of $k$-generalized Fibonacci numbers  for small $k$'s:

\begin{center}
   \begin{tabular}{|c|c|c|c|}
     \hline
     $\textbf{$k$}$ & \textbf{Name} & \textbf{First few terms} \\
     \hline
     2 & Fibonacci & 0, 1, 1, 2, 3, 5, 8, 13, 21, 34, 55, 89, 144, 233, 377, 610, 987, 1597, 2584, \ldots\\
     \hline
     3 & Tribonacci & 0, 0, 1, 1, 2, 4, 7, 13, 24, 44, 81, 149, 274, 504, 927, 1705, 3136, 5768, \ldots \\
     \hline
     4 & Tetranacci & 0, 0, 0, 1, 1, 2, 4, 8, 15, 29, 56, 108, 208, 401, 773, 1490, 2872, 5536, \ldots \\
     \hline

     5 & Pentanacci & 0, 0, 0, 0, 1, 1, 2, 4, 8, 16, 31, 61, 120, 236, 464, 912, 1793, 3525, \ldots \\
     \hline
     6 & Hexanacci & 0, 0, 0, 0, 0, 1, 1, 2, 4, 8, 16, 32, 63, 125, 248, 492, 976, 1936, \ldots \\
     \hline
   \end{tabular}
   
   {\center{\textbf{Table-1}: Initial values of $k$-generalized Fibonacci numbers  for small $k$'s}}
  \end{center}
  \medbreak
To study the $k$-generalized Fibonacci numbers, the polynomial $z^k-z^{k-1}-z^{k-2}-\cdots-z-1$ plays a crucial role. Now, we recall some important properties of this polynomial.

\begin{theo}\cite{M1, M2}\label{120625}
  Let $f(z) =z^k-z^{k-1}-\cdots-z-1$, for $k\geq 2$. Then
  \begin{enumerate}
    \item [(a)] $f$ has a real zero $z_0$ such that $1 < z_0 < 2$ ;
    \item [(b)] the remaining $k- 1$ zeros of $f$ lie within the unit circle in the complex plane;
    \item [(c)] the zeros of $f$ are simple.
  \end{enumerate}
\end{theo}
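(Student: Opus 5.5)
The plan is to work throughout with the auxiliary polynomial
\[
g(z)=(z-1)f(z)=z^{k+1}-2z^k+1,
\]
which follows from the telescoping identity $(z-1)(z^{k-1}+\cdots+1)=z^k-1$. The zeros of $f$ are the zeros of $g$ other than $z=1$. Note that $f(1)=1-k\neq 0$, so $z=1$ is a simple zero of $g$ that does not belong to $f$.

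For (a) we use the intermediate value theorem. We have $f(1)=1-k<0$. For the other endpoint, $f(2)=2^k-(2^k-1)=1>0$, so $f$ has a real zero $z_0\in(1,2)$. It is the only positive zero, by Descartes' rule of signs: the coefficients of $f$ have exactly one sign change. Since $g(z_0)=0$, we also record the identity $z_0^k(2-z_0)=1$. In fact $z_0>2-2^{1-k}\ge 3/2$ for $k\ge2$, but this refinement is not needed.

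For (b) we take a root $w\neq z_0$ of $f$ and show $|w|<1$.
\begin{enumerate}
\item[(i)] First, $|w|\le 1$ is impossible to violate in the following sense. If $|w|\ge 1$, then from $w^k=w^{k-1}+\cdots+1$ we get
\[
|w|^k\le |w|^{k-1}+\cdots+1,
\]
that is, $f(|w|)\le 0$. Since $f$ is negative on $[0,z_0)$ and positive beyond $z_0$, this gives $|w|\le z_0$.
\item[(ii)] This bound alone is too weak, so we pass to $g$. A root satisfies $w^k(2-w)=1$, hence $|w|^k|2-w|=1$. Suppose $|w|\ge 1$. For $|w|=r$ the point on the circle of radius $r$ that minimizes $|2-w|$ is $w=r$. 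Hence $|2-w|\ge |2-r|$, with equality only when $w=r$ is real and positive.
\item[(iii)] Consider $h(r)=r^k(2-r)$ on $[1,2]$. It equals $1$ at $r=1$ and at $r=z_0$, and it exceeds $1$ strictly between them, because $h-1=-g$ and $g<0$ on $(1,z_0)$. For $r\in[1,z_0]$ we therefore have $r^k|2-w|\ge h(r)\ge 1$. Equality forces $w$ to be real positive, so $w=1$ or $w=z_0$. But $w=1$ is not a root of $f$, and $w\neq z_0$ by assumption. So the strict inequality contradicts $|w|^k|2-w|=1$.
\item[(iv)] The boundary case $|w|=1$ is covered by the same argument, since $h(1)=1$ and equality requires $w=1$.
\end{enumerate}
Hence $|w|<1$.

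For (c) we check that no zero of $f$ is a repeated zero, and it suffices to check this for $g$, since $f'(w)=0$ and $f(w)=0$ imply $g'(w)=0$. We compute
\[
g'(z)=z^{k-1}\bigl((k+1)z-2k\bigr).
\]
Its zeros are $0$, which is not a root of $g$, and $z=2k/(k+1)$. At this point
\[
g\!\left(\tfrac{2k}{k+1}\right)=1-\left(\tfrac{2k}{k+1}\right)^{k}\tfrac{2}{k+1}.
\]
This must be shown to be nonzero. Equivalently, $h$ attains its maximum on $(0,2)$ at $2k/(k+1)$ with value strictly greater than $1$. That follows because $h(1)=1$ and $h$ is strictly increasing on $[1,2k/(k+1)]$, which is a nondegenerate interval since $2k/(k+1)>1$ for $k\ge2$. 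So $g$, and hence $f$, has only simple zeros.

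The main obstacle is step (ii)--(iii) of part (b): the naive bound $|w|\le z_0$ must be upgraded to $|w|<1$. The key is the factorization through $g$ and the monotonicity comparison of $r^k|2-w|$ against $h(r)$. I expect to handle the borderline equality cases (when $w$ is forced to be real) by direct inspection, as in (iii) and (iv).
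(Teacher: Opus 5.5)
Your proof is correct. The paper does not prove this theorem; it quotes it from Miles and Miller. So there is no in-paper argument to match, and your write-up supplies a complete, self-contained proof.

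For (a) and (c) you use the standard arguments: the intermediate value theorem plus Descartes' rule, and the factorization $g'(z)=z^{k-1}((k+1)z-2k)$ together with $h(2k/(k+1))>h(1)=1$.

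For (b) the more usual route is Rouché's theorem. One compares $g(z)=z^{k+1}-2z^k+1$ with $-2z^k$ on circles $|z|=r$, $1<r<z_0$. There $|z^{k+1}+1|\le r^{k+1}+1<2r^k$, so $g$ has exactly $k$ zeros in $|z|<r$. Hence every zero other than $z_0$ lies in $|z|\le 1$, and the unit circle is checked separately. You use the same inequality $2r^k-r^{k+1}>1$, but pointwise at a hypothetical root: $|w|^k|2-w|\ge h(|w|)\ge 1$, with the equality case forcing $w\in\{1,z_0\}$. This avoids zero counting and all complex analysis. The price is that you need the a priori bound $|w|\le z_0$ from step (i), whereas the Rouché count supplies it automatically.

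Two cosmetic points:
\begin{enumerate}
\item[(1)] The opening sentence of (i) is misworded. What you prove there is $|w|\le z_0$, not $|w|\le 1$, and the hypothesis $|w|\ge 1$ is not needed for that step.
\item[(2)] Step (iv) is redundant, since $r=1$ is already covered by (iii).
\end{enumerate}
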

Let the zeros of the polynomial equation $z^k-z^{k-1}-z^{k-2}-\cdots-z-1=0$ be denoted by $\alpha_1, \alpha_2,\cdots,\alpha_k$. By Theorem \ref{120625}, one of the $\alpha_i$'s is real and strictly lies between  one and two.  Without loss of generality, we assume that $1< \alpha_1 <2$. Hence, by Theorem \ref{120625}, we have $\mid \alpha_j\mid \leq 1$ for $2\leq j \leq k$. With this understanding, we define $\phi:= \alpha_1$.\\
Analogous to the classical Fibonacci sequence, a formal power series can be constructed in which the coefficients correspond to the $k$-generalized Fibonacci numbers:
$$F(z)=\sum_{n=0}^{\infty} F_n^{(k)}z^n=\sum_{n=k-1}^{\infty} F_n^{(k)}z^n,$$
as $F_0^{(k)}=0,~F_1^{(k)}=0,~\cdots,F_{k-2}^{(k)}=0$. Now, by separating initial $k$ terms from the sum and substitute the recurrence relation for $F_n^{(k)}$ into the coefficients of the sum, we can get
$$F(z)=z^{k-1}+zF(z)+z^2F(z)+\cdots+z^kF(z),$$
which gives the corresponding generating function of the  $k$-generalized Fibonacci numbers as
$$\sum_{n=0}^{\infty} F_n^{(k)}z^n =\frac{z^{k-1}}{1-z-z^2-\cdots-z^k}.$$
In a manner analogous to the classical Fibonacci numbers, the generating function naturally suggests an integral representation for the $n$th term of the $k$-generalized Fibonacci sequence, as follows:
\begin{eqnarray}\label{7o1}
  F_n^{(k)}:=\frac{1}{2\pi i}\int\limits_{|z|=\phi^+}\frac{z^n}{z^k-z^{k-1}-z^{k-2}-\cdots-z-1}\; dz,
\end{eqnarray}
where $n\geq 0$ and $k\geq 2$ are two integers and $\phi^+$ is any real number strictly greater than $\alpha_1$. On simplification, (\ref{7o1}) can be written as
\begin{eqnarray}\label{7o2}
  F_n^{(k)}:=\frac{1}{2\pi i}\int\limits_{|z|=\phi^+}\frac{z^n(z-1)}{z^{k+1}-2z^{k}+1}\; dz.
\end{eqnarray}
\begin{theo}
  Let $  F_n^{(k)}$ be the $n$th $k$-generalized Fibonacci numbers defined by (\ref{7o1}). Then $  F_n^{(k)}=0$ for $0\leq n\leq k-2$.
\end{theo}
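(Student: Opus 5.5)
We need to show that the integral in (\ref{7o1}) vanishes for $0\le n\le k-2$. The cleanest route avoids residues at the individual zeros $\alpha_j$ and instead evaluates the integral at infinity. The point is that the circle $|z|=\phi^+$ encloses every zero of $f(z)=z^k-z^{k-1}-\cdots-z-1$. By Theorem \ref{120625}, $\alpha_1=\phi<\phi^+$ and $|\alpha_j|\le 1<\phi^+$ for $j\ge 2$. Hence the integrand $z^n/f(z)$ is holomorphic on the region $|z|\ge\phi^+$ (polynomial over a polynomial with no zeros there). By Cauchy's theorem, the integral over $|z|=\phi^+$ equals the integral over $|z|=R$ for every $R\ge \phi^+$.

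\textbf{Step 1: let $R\to\infty$.} For large $|z|$ we have $|f(z)|\ge |z|^k-(|z|^{k-1}+\cdots+1)\ge \tfrac12|z|^k$ once $R$ is large enough (for instance $R\ge 3$). Then
\[
\Bigl|\frac{1}{2\pi i}\int_{|z|=R}\frac{z^n}{f(z)}\,dz\Bigr|\le \frac{1}{2\pi}\cdot 2\pi R\cdot \frac{R^n}{R^k/2}=2R^{\,n+1-k}.
\]
Since $n\le k-2$, the exponent satisfies $n+1-k\le -1$, so the bound tends to $0$ as $R\to\infty$. The integral does not depend on $R$, so it equals $0$. This gives $F_n^{(k)}=0$.

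\textbf{Alternative via residues.} Equivalently, substitute $z=1/w$. The integral becomes $\frac{1}{2\pi i}\int_{|w|=1/\phi^+} \frac{w^{k-n-2}}{1-w-\cdots-w^k}\,dw$. Here the integrand is holomorphic in the disc $|w|\le 1/\phi^+$, because $k-n-2\ge 0$ and the denominator has no zeros there (its zeros are the $1/\alpha_j$, of modulus $>1/\phi^+$). So the integral vanishes by Cauchy's theorem. This also matches the generating function: the coefficient of $w^{n}$ in $w^{k-1}/(1-w-\cdots-w^k)$ is zero for $n<k-1$.

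\textbf{Main point of care.} The one step needing justification is that the contour encloses all zeros, so that no singularity lies outside it. This is supplied directly by Theorem \ref{120625}(a),(b). The estimate itself is routine.
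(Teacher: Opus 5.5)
Your proposal is correct and follows essentially the paper's argument: enlarge the circle to radius $R$, bound $|f(z)|$ below by a quantity of order $R^k$, and apply the M-L inequality to get a bound of order $R^{n+1-k}\to 0$. The paper uses the bound $R^k-R^{k-1}-\cdots-1$ where you use $\tfrac12 R^k$ for $R\ge 3$, and it leaves implicit the contour enlargement that you justify via Theorem~\ref{120625}; your $w=1/z$ alternative, which reduces the claim directly to Cauchy's theorem on a disc, is also valid.
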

\begin{proof}
  Let $t(>\phi^+)$ be a real number. For $\mid z\mid =t$, we have
  \begin{eqnarray*}
 &&  \left| \frac{z^n}{z^k-z^{k-1}-z^{k-2}-\cdots-z-1} \right|\\
 &\leq& \frac{\mid z\mid ^n}{\mid z\mid^k-\mid z\mid ^{k-1}-\mid z\mid ^{k-2}-\cdots-\mid z\mid-1} \\
 &=& \frac{\mid t\mid ^n}{\mid t\mid^k-\mid t\mid ^{k-1}-\mid t\mid ^{k-2}-\cdots-\mid t\mid-1}.
  \end{eqnarray*}
  Thus by the M-L inequality \cite{Co}, we have
    \begin{eqnarray*}
 \left|F_n^{(k)}\right| &=& \left| \frac{1}{2\pi i}\int\limits_{|z|=t}\frac{z^n}{z^k-z^{k-1}-z^{k-2}-\cdots-z-1}\; dz\right|\\
 &\leq& \frac{1}{2\pi}\cdot\frac{\mid t\mid ^n}{\mid t\mid^k-\mid t\mid ^{k-1}-\mid t\mid ^{k-2}-\cdots-\mid t\mid-1}\cdot2\pi t \\
 &=& \frac{\mid t\mid ^{n+1}}{\mid t\mid^k-\mid t\mid ^{k-1}-\mid t\mid ^{k-2}-\cdots-\mid t\mid-1}\to 0~~\text{as}~~t\to\infty~~\text{when}~~ 0\leq n\leq k-2.
  \end{eqnarray*}
  Thus $ F_n^{(k)}=0$ for $0\leq n\leq k-2$ and $k\geq 2$. This completes the proof.
\end{proof}
\begin{theo}
  Let $  F_n^{(k)}$ be the $n$th $k$-generalized Fibonacci numbers defined by (\ref{7o1}). Then $  F_{k-1}^{(k)}=1$.
\end{theo}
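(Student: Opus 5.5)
The idea is to compute $F_{k-1}^{(k)}$ from its behaviour at infinity, in the same spirit as the proof of the preceding theorem, which showed $F_n^{(k)}=0$ for $0\le n\le k-2$. The integrand in (\ref{7o1}) with $n=k-1$ is $z^{k-1}/f(z)$, where $f(z)=z^k-z^{k-1}-\cdots-z-1$ is the polynomial of Theorem \ref{120625}. All zeros of $f$ lie in $|z|\le \alpha_1<\phi^+$. By Cauchy's theorem the integral is therefore unchanged if the circle $|z|=\phi^+$ is replaced by $|z|=t$ for any $t>\phi^+$.

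Write
$$\frac{z^{k-1}}{f(z)}=\frac{1}{z}+g(z),\qquad g(z)=\frac{z^{k}-f(z)}{z\,f(z)}=\frac{z^{k-1}+z^{k-2}+\cdots+z+1}{z\,f(z)}.$$
The two terms are handled separately.

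\textbf{The $1/z$ term.} Since $\frac{1}{2\pi i}\int_{|z|=t}\frac{dz}{z}=1$, this term contributes exactly $1$.

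\textbf{The remainder $g$.} The numerator of $g$ has degree $k-1$ and the denominator has degree $k+1$. Hence $|g(z)|\le C/t^2$ on $|z|=t$ for large $t$. To make this explicit, bound the numerator by $t^{k-1}+\cdots+1$ using the triangle inequality. Bound the denominator below by $t\,(t^k-t^{k-1}-\cdots-1)$ using the reverse triangle inequality, exactly as in the previous proof; this lower bound is positive for $t\ge 2$. The M-L inequality then gives
$$\left|\frac{1}{2\pi i}\int_{|z|=t} g(z)\,dz\right|\le \frac{t\,(t^{k-1}+\cdots+1)}{t\,(t^k-t^{k-1}-\cdots-1)},$$
which tends to $0$ as $t\to\infty$. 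Since the integral of $g$ does not depend on $t$, it equals $0$. Therefore $F_{k-1}^{(k)}=1$.

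An alternative route uses the residue theorem. It gives $F_{k-1}^{(k)}=\sum_j \alpha_j^{k-1}/f'(\alpha_j)$, and this sum equals $1$ by the classical identity $\sum_j \alpha_j^{k-1}/f'(\alpha_j)=1$ for a monic polynomial of degree $k$ with simple roots (Theorem \ref{120625}(c)). That identity is itself usually proved by the same large-circle argument, so I would use the direct estimate above.

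The only real work is checking the lower bound for the denominator and its limit as $t\to\infty$, which is routine.
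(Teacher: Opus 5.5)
Your proof is correct and takes essentially the same route as the paper: subtract $1/z$, bound the remainder $\frac{z^{k-1}+\cdots+z+1}{z\,f(z)}$ on $|z|=t$ using the reverse triangle inequality and the M-L inequality, and let $t\to\infty$. You also spell out a step the paper leaves implicit, namely that by Cauchy's theorem the integral does not depend on the radius $t>\alpha_1$, which is what allows passing to the limit.
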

\begin{proof}
  Let $t(>\phi^+)$ be a real number. For $\mid z\mid =t$, we have
  \begin{eqnarray*}
 &&  \left| \frac{z^{k-1}}{z^k-z^{k-1}-z^{k-2}-\cdots-z-1}-\frac{1}{z} \right|\\
 &=&  \left| \frac{z^{k-1}+z^{k-2}+\cdots+z+1}{z(z^k-z^{k-1}-z^{k-2}-\cdots-z-1)} \right|\\
 &\leq& \frac{\mid z\mid ^{k-1}+\mid z\mid ^{k-2}+\cdots+\mid z\mid+1}{\mid z\mid (\mid z\mid^k-\mid z\mid ^{k-1}-\mid z\mid ^{k-2}-\cdots-\mid z\mid-1)} \\
&=& \frac{\mid t\mid ^{k-1}+\mid t\mid ^{k-2}+\cdots+\mid t\mid+1}{\mid t\mid (\mid t\mid^k-\mid t\mid ^{k-1}-\mid t\mid ^{k-2}-\cdots-\mid t\mid-1)}.
  \end{eqnarray*}
  Thus by the M-L inequality \cite{Co}, we have
    \begin{eqnarray*}
 \left|F_{k-1}^{(k)}-1\right| &=&  \left| \frac{1}{2\pi i}\int\limits_{|z|=t}\frac{z^{k-1}}{z^k-z^{k-1}-z^{k-2}-\cdots-z-1}\; dz-\frac{1}{2\pi i}\int\limits_{|z|=t}\frac{1}{z}\;dz\right|\\
 &\leq& \frac{1}{2\pi}\cdot\frac{\mid t\mid ^{k-1}+\mid t\mid ^{k-2}+\cdots+\mid t\mid+1}{\mid t\mid (\mid t\mid^k-\mid t\mid ^{k-1}-\mid t\mid ^{k-2}-\cdots-\mid t\mid-1)}\cdot2\pi t \\
 &=& \frac{\mid t\mid ^{k-1}+\mid t\mid ^{k-2}+\cdots+\mid t\mid+1}{(\mid t\mid^k-\mid t\mid ^{k-1}-\mid t\mid ^{k-2}-\cdots-\mid t\mid-1)}\to 0~~\text{as}~~t\to\infty.
  \end{eqnarray*}
  Thus $  F_{k-1}^{(k)}=1$. This completes the proof.
\end{proof}
\begin{theo}
Let $  F_n^{(k)}$ be the $n$th $k$-generalized Fibonacci numbers defined by (\ref{7o1}). Then
  $$F_n^{(k)}=F_{n-1}^{(k)}+F_{n-2}^{(k)}+\cdots+F_{n-k}^{(k)}~~\text{if}~~n\geq k.$$
\end{theo}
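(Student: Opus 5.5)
The plan is to mirror the argument given for the classical Fibonacci recurrence in item (iii) of Section 1: combine the $k+1$ integrals into a single contour integral over the same circle and cancel the denominator. Write $f(z)=z^k-z^{k-1}-\cdots-z-1$, and fix $n\geq k$. Every $F_m^{(k)}$ with $m\geq 0$ is defined by (\ref{7o1}) as an integral over the circle $|z|=\phi^+$. This is the same contour for each index, and the value does not depend on the choice of $\phi^+>\alpha_1$. That independence follows from Cauchy's theorem, because all zeros of $f$ lie in $|z|\leq \alpha_1$ by Theorem \ref{120625}. Since $n\geq k$, the indices $n-1,\dots,n-k$ are all nonnegative, so each of $F_{n-1}^{(k)},\dots,F_{n-k}^{(k)}$ is given by (\ref{7o1}).

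By linearity of the integral,
\begin{eqnarray*}
F_n^{(k)}-F_{n-1}^{(k)}-\cdots-F_{n-k}^{(k)} &=& \frac{1}{2\pi i}\int\limits_{|z|=\phi^+}\frac{z^n-z^{n-1}-\cdots-z^{n-k}}{f(z)}\;dz.
\end{eqnarray*}
Next factor the numerator as $z^{n-k}\left(z^k-z^{k-1}-\cdots-1\right)=z^{n-k}f(z)$. This cancels the denominator, leaving the integral of $z^{n-k}$. Because $n-k\geq 0$, this is a polynomial and hence entire, so Cauchy's theorem gives
\begin{eqnarray*}
\frac{1}{2\pi i}\int\limits_{|z|=\phi^+} z^{n-k}\;dz &=& 0.
\end{eqnarray*}
This yields the recurrence $F_n^{(k)}=F_{n-1}^{(k)}+\cdots+F_{n-k}^{(k)}$ for $n\geq k$.

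The only points needing care are these. First, the condition $n\geq k$ is exactly what makes the exponent $n-k$ nonnegative; for a negative exponent, $z^{n-k}$ would have a pole at the origin and the integral would not vanish. Second, the cancellation of $f(z)$ is legitimate on the contour, since $f$ has no zeros on $|z|=\phi^+$. There is no real obstacle here. Together with the two preceding theorems, which supply the initial values $F_n^{(k)}=0$ for $0\leq n\leq k-2$ and $F_{k-1}^{(k)}=1$, this shows that (\ref{7o1}) reproduces the $k$-generalized Fibonacci sequence.
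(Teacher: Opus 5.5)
Your proposal is correct and takes essentially the same approach as the paper. The paper also notes that $\bigl(z^n-z^{n-1}-\cdots-z^{n-k}\bigr)/f(z)=z^{n-k}$ away from the zeros of the denominator, splits the contour integral by linearity, and kills $\frac{1}{2\pi i}\int_{|z|=\phi^+} z^{n-k}\,dz$ by Cauchy's theorem; you additionally make explicit that $n\geq k$ is what makes $z^{n-k}$ entire.
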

\begin{proof}
 For $z\not\in\{1, \alpha_1, \alpha_2,\cdots, \alpha_k\}$, we have
  \begin{eqnarray*}
   \frac{z^n-(z^{n-1}+z^{n-2}+\cdots+z^{n-k})}{z^k-z^{k-1}-z^{k-2}-\cdots-z-1}
    &=& z^{n-k}.
  \end{eqnarray*}
  Thus
   \begin{eqnarray*}
    && \frac{1}{2\pi i}\int\limits_{\mid z\mid =\phi^+}\frac{z^n}{z^k-z^{k-1}-z^{k-2}-\cdots-z-1}\;dz\\
    &=&\sum_{j=1}^{k}\frac{1}{2\pi i}\int\limits_{\mid z\mid =\phi^+}\frac{z^{n-j}}{z^k-z^{k-1}-z^{k-2}-\cdots-z-1}\;dz+ \frac{1}{2\pi i}\int_{\mid z\mid =\phi+}z^{n-k}\;dz.
  \end{eqnarray*}
  Thus by the Cauchy's theorem \cite{Co}, we have
   $$F_n^{(k)}=F_{n-1}^{(k)}+F_{n-2}^{(k)}+\cdots+F_{n-k}^{(k)}.$$
   This completes the proof.
\end{proof}
Next, we observe a Binet-type formula \cite{DD, Sp} that can be used to generate the $n$th term of the $k$-generalized Fibonacci numbers. Although this formula is already known in the literature, we present a new proof using this integral representation.
\begin{theo}
Let $  F_n^{(k)}$ be the $n$th $k$-generalized Fibonacci numbers. Then $$ F_n^{(k)}=\sum\limits_{j=1}^{k}\frac{\alpha_j^n}{\prod\limits_{\substack{1\leq i\leq k\\ i\not=j}}(\alpha_j-\alpha_i)}
,$$ where  $n\geq 0$ and $ k\geq 2$ are two integers and  $\alpha_1, \alpha_2,\cdots,\alpha_k$ are the zeros of  $z^k-z^{k-1}-z^{k-2}-\cdots-z-1=0$.
\end{theo}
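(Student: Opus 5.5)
The plan is to compute the integral in (\ref{7o1}) directly with the Residue theorem, the same way $F_0$ and $F_1$ were computed for the classical case in Section~1. By the previous three theorems, the integral (\ref{7o1}) satisfies the same initial values and recurrence as the $k$-generalized Fibonacci sequence. Hence it agrees with $F_n^{(k)}$ for every $n\geq 0$, and it suffices to evaluate
$$\frac{1}{2\pi i}\int\limits_{|z|=\phi^+}\frac{z^n}{f(z)}\,dz,\qquad f(z)=z^k-z^{k-1}-\cdots-z-1 .$$

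First I locate the singularities. By Theorem~\ref{120625}, $f$ has the zeros $\alpha_1=\phi\in(1,2)$ and $\alpha_2,\dots,\alpha_k$ with $|\alpha_j|\le 1<\phi^+$. So all $k$ zeros lie strictly inside the circle $|z|=\phi^+$. For $n\geq 0$ the numerator $z^n$ is entire, so the only singularities of the integrand inside the contour are these zeros. By part (c) of Theorem~\ref{120625} each zero is simple, so each is a simple pole of $z^n/f(z)$. Note that the factor $z-1$ appearing in (\ref{7o2}) plays no role here, because I work with the form (\ref{7o1}).

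Next I compute the residues. Since $f$ is monic of degree $k$ with distinct roots, it factors as $f(z)=\prod_{i=1}^k(z-\alpha_i)$. Hence
$$\operatorname{Res}_{z=\alpha_j}\frac{z^n}{f(z)}=\lim_{z\to\alpha_j}\frac{z^n}{\prod_{i\neq j}(z-\alpha_i)}=\frac{\alpha_j^n}{\prod_{i\neq j}(\alpha_j-\alpha_i)},$$
which is equivalently $\alpha_j^n/f'(\alpha_j)$. The denominators are nonzero precisely because the roots are simple. Summing the residues by the Residue theorem gives the stated formula.

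The argument is short, and the only point needing care is making sure every pole is enclosed. The worry would be roots of modulus exactly $1$ in part (b), but these are harmless, since $\phi^+>\phi>1$ strictly. The radius $\phi^+$ was chosen for exactly this purpose, so no real obstacle remains beyond citing simplicity of the zeros.
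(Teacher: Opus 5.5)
Your proposal is correct and follows the paper's proof: write the denominator as $\prod_{j=1}^{k}(z-\alpha_j)$, note that all zeros lie inside $|z|=\phi^+$, and sum the residues $\alpha_j^n/\prod_{i\neq j}(\alpha_j-\alpha_i)$ at the simple poles. Your extra justifications make explicit what the paper leaves implicit: that the integral equals $F_n^{(k)}$ by the preceding three theorems, and that the simplicity of the zeros (Theorem~\ref{120625}(c)) keeps the denominators nonzero.
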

\begin{proof} Using the Residue theorem \cite{Co}, we have
  \begin{eqnarray*}
   F_n^{(k)} &=& \frac{1}{2\pi i}\int\limits_{|z|=\phi^+}\frac{z^n}{z^k-z^{k-1}-z^{k-2}-\cdots-z-1}\; dz \\
     &=& \frac{1}{2\pi i}\int\limits_{|z|=\phi^+}\frac{z^n}{\prod_{j=1}^{k}(z-\alpha_j)}\; dz \\
     &=& \sum\limits_{j=1}^{k}\frac{\alpha_j^n}{\prod\limits_{\substack{1\leq i\leq k\\ i\not=j}}(\alpha_j-\alpha_i)}.
  \end{eqnarray*}
This completes the proof.
\end{proof}
Thus for $k=2$, $F_n^{(2)}$ denotes the $n$-th Fibonacci number. If $\alpha, \beta$ are the two zeros of $z^2-z-1=0$, then
\begin{eqnarray*}
  F_n^{(2)} &=& \frac{\beta^n}{\beta-\alpha}+\frac{\alpha^n}{\alpha-\beta}\\
   &=& \frac{\beta^n-\alpha^n}{\beta-\alpha},
\end{eqnarray*}
 which is the famous Binet's formula for the Fibonacci numbers. Next, we prove the Lemma 2 in \cite{BT} using our method.
 \begin{theo}
Let $  F_n^{(k)}$ be the $n$th $k$-generalized Fibonacci numbers. Then for $n\geq k+1$, $$F_n^{(k)}=2F_{n-1}^{(k)}-F_{n-k-1}^{(k)}.$$
 \end{theo}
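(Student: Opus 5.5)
We use the second form (\ref{7o2}) of the integral representation, whose denominator is $z^{k+1}-2z^k+1=(z-1)(z^k-z^{k-1}-\cdots-z-1)$. The identity $F_n^{(k)}-2F_{n-1}^{(k)}+F_{n-k-1}^{(k)}=0$ has integrand numerator $z^n-2z^{n-1}+z^{n-k-1}=z^{n-k-1}(z^{k+1}-2z^k+1)$. This cancels the denominator exactly, just as in the proof of the recurrence theorem above.

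First, for $n\geq k+1$, every index $n$, $n-1$, $n-k-1$ is nonnegative, so each of $F_n^{(k)},F_{n-1}^{(k)},F_{n-k-1}^{(k)}$ is given by (\ref{7o2}) over the same circle $|z|=\phi^+$. By linearity,
$$F_n^{(k)}-2F_{n-1}^{(k)}+F_{n-k-1}^{(k)}=\frac{1}{2\pi i}\int\limits_{|z|=\phi^+}\frac{(z^n-2z^{n-1}+z^{n-k-1})(z-1)}{z^{k+1}-2z^{k}+1}\,dz.$$
Second, for $z\notin\{1,\alpha_1,\ldots,\alpha_k\}$, the numerator factors as $z^{n-k-1}(z-1)(z^{k+1}-2z^k+1)$, so the integrand equals $z^{n-k-1}(z-1)=z^{n-k}-z^{n-k-1}$. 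The points $1,\alpha_j$ are removable singularities, and none lies on the contour since $\phi^+>\alpha_1>1$ and $|\alpha_j|\le 1$ for $j\ge 2$. Third, since $n-k-1\geq 0$, this is a polynomial, hence entire. Cauchy's theorem then gives integral zero, which yields $F_n^{(k)}=2F_{n-1}^{(k)}-F_{n-k-1}^{(k)}$.

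The only delicate point is the hypothesis $n\geq k+1$. It guarantees that the exponent $n-k-1$ is nonnegative, so the reduced integrand has no pole at the origin, and that $F_{n-k-1}^{(k)}$ is defined by the integral. It is worth noting where the hypothesis enters. If $n=k$, the reduced integrand $z^{0}-z^{-1}$ would contribute a residue $-1$, and the identity would fail unless one also sets $F_{-1}^{(k)}=0$.

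Alternatively, one could avoid (\ref{7o2}) and deduce the result from the recurrence theorem above. Subtracting the recurrence at $n-1$ from the recurrence at $n$ gives $F_n^{(k)}-F_{n-1}^{(k)}=F_{n-1}^{(k)}-F_{n-k-1}^{(k)}$, valid for $n-1\geq k$. This serves as a consistency check on the contour argument.
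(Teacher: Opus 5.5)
Your proof is correct and is essentially the paper's. The paper cancels the same factor $z^{k+1}-2z^k+1$, writing the denominator of (\ref{7o1}) as $z^k-\frac{z^k-1}{z-1}$ rather than passing to (\ref{7o2}), and then applies Cauchy's theorem to $z^{n-k-1}(z-1)$. One slip in your side remark: at $n=k$ the identity reads $1=2-F_{-1}^{(k)}$, so it needs $F_{-1}^{(k)}=1$ (the backward extension of the recurrence), not $0$; and $0$ is exactly what the contour formula assigns to $F_{-1}^{(k)}$, because of the extra pole at the origin.
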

 \begin{proof}
 For $z\not\in\{1, \alpha_1, \alpha_2,\cdots, \alpha_k\}$, we have
  \begin{eqnarray*}
   \frac{z^n-2z^{n-1}+z^{n-k-1}}{z^k-z^{k-1}-z^{k-2}-\cdots-z-1}
    &=& \frac{z^{n-k-1}(z^{k+1}-2z^{k}+1)}{z^k-\frac{z^k-1}{z-1}} \\
    &=& z^{n-k-1}(z-1).
  \end{eqnarray*}
  Thus by the Cauchy's theorem \cite{Co}, we have
    \begin{eqnarray*}
    \frac{1}{2\pi i}\int_{\mid z\mid =\phi^+} \frac{z^n-2z^{n-1}+z^{n-k-1}}{z^k-z^{k-1}-z^{k-2}-\cdots-z-1}\;dz
       =\frac{1}{2\pi i}\int_{\mid z\mid =\phi^+} z^{n-k-1}(z-1)\;dz=0.
  \end{eqnarray*}
i.e.,
   $$F_n^{(k)}=2F_{n-1}^{(k)}-F_{n-k-1}^{(k)}.$$
   This completes the proof.
\end{proof}

\section{Some identities involving $2$-generalized Fibonacci numbers}
In this section, we revisit some well-known identities given in \cite{W} involving the 2-generalized Fibonacci numbers, i.e., the classical Fibonacci numbers, using their integral representation as described in Section \ref{7o1}. The $n$th Fibonacci number can be expressed as
$$F_n=F_n^{(2)}=\frac{1}{2\pi i}\int_{|z|=\phi^+}\frac{z^n}{z^2-z-1}\; dz,$$
where $\phi^{+}$ is any real number strictly greater than $\frac{1+\sqrt{5}}{2}$. Let $\alpha$ and $\beta$ be the two zeros of $z^2-z-1=0$.
\begin{theo}
If $F_n$ is the $n$th Fibonacci number, then $\sum\limits_{j=1}^{n}F_j=F_{n+2}-1$.
\end{theo}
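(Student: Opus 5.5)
We use the integral representation: write the sum as a single contour integral, apply the finite geometric series, and then split the result into pieces already identified earlier in the paper. By linearity of the integral,
$$\sum_{j=1}^{n}F_j=\frac{1}{2\pi i}\int_{|z|=\phi^+}\frac{z+z^2+\cdots+z^n}{z^2-z-1}\,dz .$$
For $z\neq 1$ the geometric sum gives $z+\cdots+z^n=\frac{z^{n+1}-z}{z-1}$. The point $z=1$ is not a singularity of the original integrand, since the numerator is a polynomial. We can either keep the polynomial form or note that $z=1$ is a removable singularity after rewriting.

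The cleaner route avoids introducing $1/(z-1)$ at all and uses the key algebraic identity directly. Since $z^2-z-1=z(z-1)-1$, we have
$$(z^2-z-1)(z+z^2+\cdots+z^n)=z^{n+1}(z-1)-z(z-1)-(z+\cdots+z^n)\cdot 1 ,$$
which is awkward. Instead I would check the polynomial identity
$$z^{n+2}-1-(z+z^2+\cdots+z^n)=(z^2-z-1)\,q(z)$$
for some polynomial $q$. It suffices that the left side vanishes at $\alpha$ and $\beta$. Evaluate at a root $\rho$ of $z^2-z-1$, where $\rho\neq 1$ and $\rho^2-\rho=1$. Then
$$\rho+\cdots+\rho^n=\frac{\rho^{n+1}-\rho}{\rho-1}=\rho(\rho^{n+1}-\rho)=\rho^{n+2}-\rho^2=\rho^{n+2}-\rho-1,$$
using $1/(\rho-1)=\rho$. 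So the left side equals $\rho^{n+2}-1-\rho^{n+2}+\rho+1=\rho$. That is not zero, so the constant must be adjusted.

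Hence I expect the correct decomposition to be
$$\frac{z+\cdots+z^n}{z^2-z-1}=\frac{z^{n+2}}{z^2-z-1}-\frac{z^{2}}{z^2-z-1}+(\text{polynomial}),$$
since at each root $\rho^{n+2}-\rho^2-(\rho+\cdots+\rho^n)=0$ by the computation above. The difference of the numerators is therefore divisible by $z^2-z-1$, and its quotient is a polynomial, whose integral over $|z|=\phi^+$ vanishes by Cauchy's theorem.

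Integrating, $\sum_{j=1}^nF_j=F_{n+2}-F_2$. Since the integral representation gives $F_2=F_1+F_0=1$ (from items (i)–(iii) for the classical case), this yields $F_{n+2}-1$. The main obstacle is identifying the correct subtracted term ($z^2$ rather than the constant $1$). The root-evaluation check above settles it, and the divisibility follows because $\alpha\neq\beta$ are simple roots.
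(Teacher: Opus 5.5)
Your proof is correct, and it takes a different route from the paper's.

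The paper combines $\sum_{j=1}^n F_j - F_{n+2}$ into one integral. It sums the geometric series as $z(z^n-1)/(z-1)$, which creates an auxiliary pole at $z=1$ inside $|z|=\phi^+$. It then rewrites the integrand as $-\frac{z^{n+1}}{z-1}-\frac{z}{(z-1)(z^2-z-1)}$ and evaluates residues at $1$, $\alpha$, $\beta$, using $\alpha^2-1=\alpha$ and $\beta^2-1=\beta$ to arrive at $-1$.

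You instead read the constant $1$ as $F_2$. You then show that the integrands for $\sum_{j=1}^n F_j$ and $F_{n+2}-F_2$ differ by a polynomial, so Cauchy's theorem alone finishes the argument. This is the same mechanism the paper uses for the recurrence $F_n^{(k)}=F_{n-1}^{(k)}+\cdots+F_{n-k}^{(k)}$.

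What each route buys:
\begin{itemize}
\item Your route needs no residue bookkeeping and never introduces the pole at $z=1$. It also proves the stronger congruence $z+\cdots+z^n\equiv z^{n+2}-z^2 \pmod{z^2-z-1}$.
\item The paper's route is mechanical and produces the constant $-1$ directly. It avoids having to guess which $F_m$ should represent the constant, which is what you identified as the main obstacle.
\end{itemize}

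One correction: the identity you set aside as awkward is miscomputed. Since $z(z-1)(z+\cdots+z^n)=z^{n+2}-z^2$, one has $(z^2-z-1)(z+\cdots+z^n)=z^{n+2}-z^2-(z+\cdots+z^n)$. This exhibits your quotient explicitly as $q(z)=z+\cdots+z^n$ and makes the root-evaluation step unnecessary.
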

\begin{proof} Using the integral representation of $n$th Fibonacci number and the residue theorem, we have
 \begin{eqnarray*}
  \nonumber 
  \sum_{j=1}^{n}F_j-F_{n+2} &=& \frac{1}{2\pi i}\int\limits_{|z|=\phi^+} \frac{\sum_{j=1}^{n}z^j-z^{n+2}}{z^2-z-1}\;dz \\
    &=& \frac{1}{2\pi i}\int\limits_{|z|=\phi^+} \frac{z(z^{n}-1)-z^{n+2}(z-1)}{(z-1)(z^2-z-1)}\;dz\\
    &=& \frac{1}{2\pi i}\int\limits_{|z|=\phi^+} \frac{-z^{n+1}(z^{2}-z-1)-z}{(z-1)(z^2-z-1)} \;dz\\
     &=& \frac{1}{2\pi i}\int\limits_{|z|=\phi^+}\left(\frac{-z^{n+1}}{z-1}-\frac{z}{(z-1)(z^2-z-1)}\right)\;dz\\
        &=& -1-\left(-1+\frac{\alpha}{(\alpha-1)(\alpha-\beta)}+\frac{\beta}{(\beta-1)(\beta-\alpha)}\right)\\
         &=& -1,~~\text{as~$\alpha^2-1=\alpha$~and~ $\beta^2-1=\beta$}.
  \end{eqnarray*}
This completes the proof.
\end{proof}
\begin{theo}
If $F_n$ is the $n$th Fibonacci number, then $\sum\limits_{j=0}^{n-1}F_{2j+1}=F_{2n}$.
\end{theo}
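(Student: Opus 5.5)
We follow the pattern of the previous theorem: write both sides through the integral representation and reduce the difference to a contour integral of an entire function. By linearity,
$$\sum_{j=0}^{n-1}F_{2j+1}-F_{2n}=\frac{1}{2\pi i}\int\limits_{|z|=\phi^+}\frac{\sum_{j=0}^{n-1}z^{2j+1}-z^{2n}}{z^2-z-1}\;dz.$$
The first step is to sum the finite geometric series: $\sum_{j=0}^{n-1}z^{2j+1}=\frac{z(z^{2n}-1)}{z^2-1}$ for $z\neq\pm1$.

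The numerator then becomes
$$\frac{z^{2n+1}-z-z^{2n}(z^2-1)}{z^2-1}=\frac{-z^{2n}(z^2-z-1)-z}{z^2-1}.$$
Dividing by $z^2-z-1$, the integrand splits as
$$-\frac{z^{2n}}{z^2-1}-\frac{z}{(z^2-1)(z^2-z-1)}.$$
There is a cleaner route that avoids residues altogether. Since $z^2-1=z$ at every zero of $z^2-z-1$, it is better to use the identity $z^{2j+1}=z^{2j}(z^2-1)$ modulo $z^2-z-1$. Concretely,
$$z^{2j+1}-(z^{2j+2}-z^{2j})=-z^{2j}(z^2-z-1).$$
Therefore
$$\frac{z^{2j+1}}{z^2-z-1}=\frac{z^{2j+2}-z^{2j}}{z^2-z-1}-z^{2j}.$$
Integrating over $|z|=\phi^+$ and using Cauchy's theorem on the polynomial $z^{2j}$ gives $F_{2j+1}=F_{2j+2}-F_{2j}$. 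This is just the recurrence from the earlier theorem, but read off directly from the integral.

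Summing over $j=0,\dots,n-1$ telescopes:
$$\sum_{j=0}^{n-1}F_{2j+1}=\frac{1}{2\pi i}\int\limits_{|z|=\phi^+}\frac{z^{2n}-1}{z^2-z-1}\;dz=F_{2n}-F_0.$$
Finally, $F_0=0$ by the earlier theorem on the initial values (or by the residue computation in Section 1), which yields $F_{2n}$.

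If we instead kept the residue route, the main obstacle would be the points $z=\pm1$. These are removable for the original integrand, but they become poles once the integrand is split into two pieces. With that route one must check that the residues at $\pm1$ of the two pieces cancel, and then evaluate $\frac{\alpha}{(\alpha^2-1)(\alpha-\beta)}+\frac{\beta}{(\beta^2-1)(\beta-\alpha)}$ using $\alpha^2-1=\alpha$. The telescoping version sidesteps this, so it is the one I would present.
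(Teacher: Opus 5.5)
Your proof is correct, and it takes a genuinely different route from the paper's. Your opening lines (summing the geometric series and splitting the integrand) coincide with the paper's proof, but you then switch approach.

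The paper stays with the split form $-\frac{z^{2n}}{z^2-1}-\frac{z}{(z^2-1)(z^2-z-1)}$ and evaluates every residue inside $|z|=\phi^+$. The residues at $\pm1$ cancel, and $\alpha^2-1=\alpha$, $\beta^2-1=\beta$ dispose of the contributions at $\alpha,\beta$.

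You instead work term by term with the polynomial identity $z^{2j+1}=(z^{2j+2}-z^{2j})-z^{2j}(z^2-z-1)$. Cauchy's theorem then gives $F_{2j+1}=F_{2j+2}-F_{2j}$, and the sum telescopes to $F_{2n}-F_0$.

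What your version buys:
\begin{itemize}
\item It never creates the spurious poles at $\pm1$.
\item It needs no residue evaluation at all, only Cauchy's theorem for the polynomials $z^{2j}$ and the fact $F_0=0$.
\item It is shorter and less error-prone.
\end{itemize}
The price is that it is essentially the classical telescoping proof from the recurrence $F_{m}=F_{m-1}+F_{m-2}$, which the paper has already derived from the integral representation. So the contour integral adds little here beyond notation.

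The paper's residue computation fits the purpose of the note better. It handles the whole sum at once in closed form, and it gives a mechanical template that still works for sums where no convenient telescoping identity is visible.
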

\begin{proof} Using the integral representation of $n$th Fibonacci number and the residue theorem, we have
  \begin{eqnarray*}
  \sum_{j=0}^{n-1}F_{2j+1}-F_{2n} &=& \frac{1}{2\pi i}\int\limits_{|z|=\phi+} \frac{\sum_{j=0}^{n-1}z^{2j+1}-z^{2n}}{z^2-z-1}\;dz \\
     &=& \frac{1}{2\pi i}\int\limits_{|z|=\phi^+} \frac{z(z^{2n}-1)-z^{2n}(z^2-1)}{(z^2-1)(z^2-z-1)}\;dz\\
     &=& \frac{1}{2\pi i}\int\limits_{|z|=\phi^+} \frac{-z^{2n}(z^{2}-z-1)-z}{(z^2-1)(z^2-z-1)} \;dz\\
     &=& \frac{1}{2\pi i}\int\limits_{|z|=\phi^+}\left(\frac{-z^{2n}}{z^2-1}-\frac{z}{(z^2-1)(z^2-z-1)}\right)\;dz\\
    &=& -\left(\frac{1}{2}+\frac{1}{-2}\right)-\left(\frac{1}{-2}+\frac{-1}{-2}+\frac{\alpha}{(\alpha^2-1)(\alpha-\beta)}+\frac{\beta}{(\beta^2-1)(\beta-\alpha)}\right)\\
    &=& 0,~~\text{as~$\alpha^2-1=\alpha$,~\text{and}~~$\beta^2-1=\beta$}.
  \end{eqnarray*}
  This completes the proof.
  \end{proof}
  A similar argument gives the following result:
  \begin{theo}
If $F_n$ is the $n$-th Fibonacci number, then $\sum\limits_{i=1}^{n}F_{2i}=F_{2n+1}-1$.
\end{theo}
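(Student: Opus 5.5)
We mirror the proof of the preceding identity for the odd-indexed sum, using the integral representation and the residue theorem. First we write
$$\sum_{i=1}^{n}F_{2i}-F_{2n+1}=\frac{1}{2\pi i}\int\limits_{|z|=\phi^+}\frac{\sum_{i=1}^{n}z^{2i}-z^{2n+1}}{z^2-z-1}\;dz .$$
Next we sum the geometric series, $\sum_{i=1}^{n}z^{2i}=\frac{z^2(z^{2n}-1)}{z^2-1}$, valid for $z\neq\pm1$; these points lie off the contour since $\phi^+>1$. Bringing everything over the common denominator gives the numerator
$$z^{2n+2}-z^2-z^{2n+1}(z^2-1)=-z^{2n+1}(z^2-z-1)-z^2 .$$
The integrand therefore splits as
$$-\frac{z^{2n+1}}{z^2-1}-\frac{z^2}{(z^2-1)(z^2-z-1)}.$$
This is the same algebraic step as before, with $z$ replaced by $z^2$ in the correction term and the exponent shifted by one.

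Then we evaluate each piece by residues inside $|z|=\phi^+$, which encloses $\pm1$, $\alpha$ and $\beta$.

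For the first piece, the residues of $z^{2n+1}/(z^2-1)$ at $z=1$ and $z=-1$ are $\frac12$ and $\frac{(-1)^{2n+1}}{-2}=\frac12$. Their sum is $1$, so the first integral contributes $-1$.

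For the second piece, the residues of $z^2/((z^2-1)(z^2-z-1))$ are as follows.
\begin{enumerate}
\item[(i)] At $z=1$: $\frac{1}{2\cdot(-1)}=-\frac12$.
\item[(ii)] At $z=-1$: $\frac{1}{(-2)\cdot 1}=-\frac12$.
\item[(iii)] At $\alpha$ and $\beta$: $\frac{\alpha^2}{(\alpha^2-1)(\alpha-\beta)}+\frac{\beta^2}{(\beta^2-1)(\beta-\alpha)}$. Using $\alpha^2-1=\alpha$ and $\beta^2-1=\beta$, this reduces to $\frac{\alpha-\beta}{\alpha-\beta}=1$.
\end{enumerate}
The residues sum to $-\frac12-\frac12+1=0$, so the second integral contributes $0$.

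Hence $\sum_{i=1}^n F_{2i}-F_{2n+1}=-1$, which is the claim.

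The only delicate step is the bookkeeping of the residues at $\pm1$, in particular the sign of $(-1)^{2n+1}$ and of the factor $(-1)^2-(-1)-1=1$. These must be checked carefully. Alternatively, one can avoid that bookkeeping entirely. Since the total integrand has no poles at $\pm1$, the contributions from $\pm1$ must cancel between the two pieces, and only the residues at $\alpha$ and $\beta$ from the second piece, together with the constant $-1$, remain.
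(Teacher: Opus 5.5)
Your proof is correct and is exactly the ``similar argument'' the paper has in mind: it mirrors, step for step, the paper's proof of $\sum_{j=0}^{n-1}F_{2j+1}=F_{2n}$, and your split of the integrand and your residues at $\pm1,\alpha,\beta$ all check out. One small caution about your closing aside: once the $\pm1$ contributions cancel, the whole answer is $-1$ times the sum of the residues of the second piece at $\alpha$ and $\beta$ (that sum is $1$), so there is no further additive constant $-1$.
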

\begin{note}
 For a nice proof of $F_{n+2}=\sum_{k=0}^{\infty}\binom{n+1-k}{k}$ using the contour integration, see \cite{web}.
\end{note}
\section{Acknowledgement}
We sincerely thank the anonymous referees and the Editor for their careful reading of the manuscript and their valuable suggestions. We are also grateful to Prof. Raymond Mortini for his insightful discussions during the preparation of the initial draft.

\end{document}